\documentclass[12pt]{amsart}
\textwidth6.4in
\hoffset=-0.5in
\usepackage[english]{babel}
\usepackage{graphicx}
\usepackage{psfrag}
\newtheorem{thm}{Theorem}[section]

\newtheorem{lem}[thm]{Lemma}
\newtheorem{prop}[thm]{Proposition}

\theoremstyle{definition}

\newtheorem{rem}[thm]{Remark}

\def \n{\noindent }
\def \bs{\bigskip}
\def \F{\mathcal{F}}
\def \A{\mathcal A}
\def \B{\mathcal B}
\def \D{\mathcal D}
\def \P{\mathcal P}
\def \M{\mathcal M}
\def \Q{\mathcal Q}
\def \Z{\mathbb Z}

\def \Z{\mathbb Z}
\def \av{{\rm av}}

\def \s{S}
\def \r{R}
\def \t{T}

\def \ty{\infty}

\title[A Note on Minimal zero-sum sequences over $\Z$]{A Note on Minimal zero-sum sequences over $\Z$}

\author[P. A. Sissokho]{Papa A. Sissokho}
\address{Mathematics Department\\ Illinois State University\\
Normal, Illinois 61790--4520, U.S.A.}
\email{psissok@ilstu.edu}
\begin{document}

\date{}

\begin{abstract}
A {\em zero-sum sequence over ${\mathbb Z}$} is a sequence with terms in ${\mathbb Z}$ that sum to $0$.
It is called {\em minimal} if it does not contain a proper zero-sum subsequence.
Consider a minimal zero-sum sequence over ${\mathbb Z}$ with positive terms 
$a_1,\ldots,a_h$ and negative terms $b_1,\ldots,b_k$.  We prove that 
$h\leq \lfloor \sigma^+/k\rfloor$ and $k\leq \lfloor \sigma^+/h\rfloor$, where 
$\sigma^+=\sum_{i=1}^h a_i=-\sum_{j=1}^k b_j$. These bounds are tight and improve
upon previous results. We also show a natural partial order structure on the collection 
of all minimal zero-sum sequences over the set $\{i\in {\mathbb Z}:\; -n\leq i\leq n\}$
for any positive integer $n$.
\end{abstract}

\subjclass[2010]{Primary 11B75}

\keywords{minimal zero-sum sequence, primitive partition identity, Hilbert basis.}

\maketitle
\section{Introduction}\label{sec:intro}
We shall follow the notation and definitions in ‎Grynkiewicz's new monograph, and
refer the reader to it for the definitions that were omitted here.

For all integers $x$ and $y$ with $x\leq y$, let $[x,y]=\{i\in\Z:\; x\leq i\leq y\}$.
Let $G_0$ a non-empty subset of an additive abelian group $G$.  
Let  $\F(G_0)$ denote the free multiplicative abelian monoid with basis $G_0$, and whose
elements are the (unordered) sequences with terms in $G_0$. The identity element of $\F(G_0)$, also 
called {\em trivial sequence}, is the sequence with no terms. The operation of $\F(G_0)$  is 
the {\em sequence concatenation} product that takes $\r,\t\in\F(G_0)$ 
to $\s=\r\cdot \t\in\F(G_0)$. In this case, we say that $\r$ (respectively, $\t$) is a {\em subsequence} of $\s$. 
For every $\s=s_1\cdot\ldots\cdot s_t\in\F(G_0)$, let 
\begin{align}\label{def:not}
&\mbox{{\em the length} of $\s$, denoted by $|\s|$, be }|\s|=k;\cr 
&\mbox{{\em the sum} of $\s$, denoted by $\sigma(\s)$, be }\sigma(\s)=s_1+s_2+\ldots+s_t;\\
&\mbox{{\em the average} of $\s$, denoted by $\s_\av$, be }\s_\av=\sigma(\s)/|\s|;\cr
&\mbox{{\em the infinite norm} of $\s$, denoted by $\|\s\|_\ty$, be }\|\s\|_\ty=\sup\limits_{1\leq i\leq t}|s_i|.\notag
\end{align}
For any $g\in G$ and any integer $d\geq 0$, we let
\[g^{[d]}=\underbrace{g\cdot\ldots\cdot g}_d,\]
where $g^{[d]}$ denotes the empty sequence if $d=0$.  

A {\em zero-sum sequence over $G_0$} is a sequence $\s\in\F(G_0)$ such that $\sigma(\s)=0$. 
Such a sequence is called {\em minimal} if it does not contain a proper 
non-trivial zero-sum subsequence. Then, the submonoid
\[\B_0=\B(G_0)=\{\s \in\F(G_0):\;\sigma(\s)=0\}\]
of $\F(G_0)$ is a Krull monoid (e.g., see~\cite{Gr}).
The set $\A(\B_0)$ of the {\em atoms} of $\B_0$ is the set of all minimal zero-sum sequences in $\B_0$. 
A characterization of $\A(\B_0)$ would shed some light on the factorization properties of $\B_0$ 
(e.g., see~\cite{GH,GGG}). 

Given a minimal zero-sum sequence $\s=s_1\cdot\ldots\cdot s_t\in \A(\B_0)$, we are interested in bounding 
its length in function of its terms $s_i$ for $i\in[1,t]$. We are also interested in finding a natural 
structure for $\A(\B_0)$ when $G_0$ (and thus, $\B_0$) is finite.

The study of zero-sum sequences in $\B(G)$, when $G$ a finite cyclic group,
is a very active area of research (e.g., see~\cite{Ad,C,EGZ,Ga,LP,Po,XS}) with applications to  
Factorization Theory (e.g., see~\cite{Ba,GG,Ge,GH}). Similar, but less extensive, investigations have been 
carried out when $G$ is an infinite cyclic group (e.g., see~\cite{BCRSS,CSS,GGG,GGSS}). 

For all $\s\in\B(\Z)$ with $|\s|$ finite and $|\s|>1$, there exist positive integers 
$a_1,\ldots,a_n$ and $b_1,\ldots,b_m$ with $a_1\leq \ldots\leq a_n$ and $b_1\leq\ldots\leq b_m$, 
such that 
\begin{equation}\label{eq:stdf}
\s^+=\prod_{i=1}^na_i^{[x_i]},\; \s^-=\prod_{j=1}^m(-b_j)^{[y_j]},\;\mbox{ and } \s=\s^+\cdot\s^-,
\end{equation}
where $x_i$ and $y_j$ are positive integers for all $i\in[1,n]$ and $j\in[1,m]$.

In his work on Diophantine linear equations, Lambert~\cite{La} 
proved the following theorem.
\begin{thm}[Lambert~\cite{La}]\label{thm:La}
Let $\s$ be a minimal zero-sum sequence over $\Z$ with $|\s|$ finite and $|\s|>1$. 
If $\s$ is as in~\eqref{eq:stdf}, then 
\[|\s^+|\leq \|\s^-\|_\ty=b_m \mbox{ and } |\s^-|\leq \|\s^+\|_\ty=a_n.\]
\end{thm}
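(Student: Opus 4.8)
The plan is to reduce to one inequality by symmetry, distill minimality into a statement about partial sums, and then run a pigeonhole argument modulo the largest negative term.

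Negating every term of $\s$ interchanges $\s^+$ and $\s^-$, so it suffices to prove $|\s^+|\le\|\s^-\|_\infty$; put $h=|\s^+|$, $N=\sigma(\s^+)=-\sigma(\s^-)>0$, and $b=\|\s^-\|_\infty=b_m$. Since $|\s|>1$ both $\s^+$ and $\s^-$ are nontrivial, and the case $h=1$ is immediate because then $b\ge 1$. I would first record the combinatorial core of minimality: if $\r$ is a subsequence of $\s^+$ and $\t$ a subsequence of $\s^-$ with $\sigma(\r)=-\sigma(\t)$, then $\r\cdot\t$ is a zero-sum subsequence of $\s$, hence trivial or all of $\s$; since $0<\sigma(\r)<N$ for every proper nontrivial subsequence $\r$ of $\s^+$, it follows that \emph{no proper nontrivial subsequence of $\s^+$ has sum equal to $-\sigma(\t)$ for any subsequence $\t$ of $\s^-$}. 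In particular, writing $\s^+$ in an arbitrary order as $c_1\cdot\ldots\cdot c_h$ and setting $S_r=\sigma(c_1\cdot\ldots\cdot c_r)$, the strictly increasing chain $0=S_0<S_1<\ldots<S_h=N$ has the property that none of $S_1,\ldots,S_{h-1}$ equals $-\sigma(\t)$ for a subsequence $\t$ of $\s^-$.

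The crux is to deduce $h\le b$. Arguing by contradiction, suppose $h\ge b+1$ and reduce $S_0,\ldots,S_h$ modulo $b$. These are $h+1\ge b+2$ values lying in $b$ residue classes, so at least two pairs coincide; discarding the pair $(0,h)$ if it happens to coincide, we still get indices $0\le i<j\le h$ with $(i,j)\ne(0,h)$ and $S_i\equiv S_j\pmod b$. Then $\r:=c_{i+1}\cdot\ldots\cdot c_j$ is a proper nontrivial subsequence of $\s^+$ with $\sigma(\r)=S_j-S_i=qb$ for some integer $q\ge 1$. As the maximum $b$ is attained, $-b$ is a term of $\s^-$; if $q$ does not exceed the multiplicity of $-b$ in $\s^-$, then letting $\t$ consist of $q$ copies of $-b$ gives $\sigma(\r)=-\sigma(\t)$ with $\r$ proper and nontrivial, contradicting the displayed statement and completing the proof.

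The step I expect to be the main obstacle is exactly the proviso ``if $q$ does not exceed the multiplicity of $-b$'': $qb$ may be larger than $b$ times that multiplicity and, worse, need not be the sum of any subsequence of $\s^-$ at all --- for instance $(-3)\cdot(-3)\cdot(-4)$ has no subsequence summing to $8$ --- so the naive cancellation can fail. I see two ways to try to salvage it. One is to be economical: order $\s^+$ increasingly and choose the congruent pair with $j-i$ minimal, which makes $S_i,\ldots,S_{j-1}$ pairwise incongruent mod $b$ and so bounds the block $c_{i+1}\cdot\ldots\cdot c_j$ to at most $b$ terms; one then hopes to reduce to the case $q=1$, which is always cancellable. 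This does not obviously close, so the cleaner route is to prove the stronger inequality $|\s^+|\cdot|\s^-|\le\sigma^+$ directly --- it implies the theorem at once, since $\sigma^+=-\sigma(\s^-)\le|\s^-|\cdot\|\s^-\|_\infty$ and, symmetrically, $\sigma^+\le|\s^+|\cdot\|\s^+\|_\infty$ --- and to attack that multiplicative estimate by a double-counting argument comparing the partial-sum chain of $\s^+$ with that of $\s^-$ inside $[0,N]$, using that (by the displayed statement, for all orderings) these two chains meet only at $0$ and $N$.
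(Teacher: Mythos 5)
Your reduction to the single inequality $|\s^+|\le b_m$ and your distillation of minimality (no proper nontrivial subsequence of $\s^+$ has a sum that is also a subset sum of $-\s^-$) are both correct, but the argument does not close, and you have essentially said so yourself. The pigeonhole step only produces a proper nontrivial block $\r$ of $\s^+$ with $\sigma(\r)=qb_m$, and this contradicts minimality only when $qb_m$ is actually realized as a subset sum of $-\s^-$; nothing forces $q$ to be at most the multiplicity of $-b_m$, and your own example $(-3)\cdot(-3)\cdot(-4)$ shows that multiples of $b_m$ need not be negative subset sums at all. The ``minimal gap'' repair bounds the \emph{length} of the block by $b_m$, but the terms inside the block can each be as large as $a_n$, so $q$ can still be as large as $a_n$; it does not reduce matters to the cancellable case $q=1$.

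The fallback route is also not yet a proof. The multiplicative inequality $|\s^+|\cdot|\s^-|\le\sigma(\s^+)$ is true --- it is precisely Theorem~\ref{thm:main} of this paper --- but the mechanism you propose cannot deliver it. The disjointness of the interior subset sums of $\s^+$ and of $-\s^-$ inside $[1,N-1]$, where $N=\sigma(\s^+)$ (and taking the union over all orderings gives exactly these two subset-sum sets), yields only $(|\s^+|-1)+(|\s^-|-1)\le N-1$, i.e.\ the additive bound $|\s^+|+|\s^-|\le \sigma(\s^+)+1$, which is strictly weaker than Lambert's statement: for $\s=a^{[b]}\cdot(-b)^{[a]}$ with $\gcd(a,b)=1$ the two interior subset-sum sets have only $b-1$ and $a-1$ elements while $N=ab$, so their disjointness carries almost no information. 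What actually proves the multiplicative bound (and hence Theorem~\ref{thm:La}, since $-\s^-_\av\le b_m$ and $\s^+_\av\le a_n$) is the derivation technique of Section~\ref{sec:main}: Lemma~\ref{lem:1} shows that an $(a_n,-b_m)$-derivation preserves minimality, and induction on $|\s|$ then transfers the bound from the shorter sequence back to $\s$. If you want a self-contained proof of Theorem~\ref{thm:La}, that induction is the route to take; the paper itself only cites Lambert and obtains Theorem~\ref{thm:La} as a consequence of the stronger Theorem~\ref{thm:main}.
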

This was reformulated and reproved in the language of sequences by Baginski et al.~\cite{BCRSS}. 
Perhaps due to inconsistent notation across various areas, Theorem~\ref{thm:La} has been independently
rediscovered by Diaconis et al.~\cite{DGS}, and Sahs et al.~\cite{SST}.
Currently, the best bounds for $|\s^+|$ and $|\s^-|$ are due to Henk-Weismantel~\cite{HW}.
They proved the following theorem for which Theorem~\ref{thm:La} is a special case upon setting $\ell=m$ and $k=n$. 
\begin{thm}[Henk-Weismantel~\cite{HW}]\label{thm:HW}
Let $\s$ be a minimal zero-sum sequence over $\Z$ with $|\s|$ finite and $|\s|>1$. 
If $\s$ is as in~\eqref{eq:stdf}, then  

\n $(J_\ell):\;$  $\;|\s^+|\leq b_\ell -\sum_{j=1}^{\ell-1}\left\lfloor\frac{b_\ell-b_j}{a_n}\right\rfloor y_j+\sum_{j=\ell+1}^{m}\left\lceil\frac{b_j-b_\ell}{a_1}\right\rceil y_j$ for all $\ell\in[1,m]$,

\n $(I_k):\;$ $\;|\s^-|\leq a_k-\sum_{i=1}^{k-1}\left\lfloor \frac{a_k-a_i}{b_m}\right\rfloor x_i +\sum_{i=k+1}^{n}\left\lceil \frac{a_i-a_k}{b_1}\right\rceil x_i$ for all $k\in[1,n]$.
\end{thm}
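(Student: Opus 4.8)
\medskip
\noindent
We indicate how to prove the inequalities $(J_\ell)$ and $(I_k)$ of Theorem~\ref{thm:HW}. First I would reduce the two families to one. Let $\s'$ be obtained from $\s$ by replacing each term $s$ by $-s$; then $\s'$ is again a minimal zero-sum sequence, with positive part $\prod_{j=1}^m b_j^{[y_j]}$ and negative part $\prod_{i=1}^n(-a_i)^{[x_i]}$, and under $\s\mapsto\s'$ one has $|\s^+|\leftrightarrow|\s^-|$, $a_i\leftrightarrow b_i$, $x_i\leftrightarrow y_i$, $a_1\leftrightarrow b_1$, and $a_n\leftrightarrow b_m$. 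Hence $(I_k)$ for $\s$ is precisely $(J_k)$ for $\s'$, so it suffices to prove $(J_\ell)$ for every $\ell\in[1,m]$; fix such an $\ell$.

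The engine is an elementary remark valid for \emph{every} ordering $\s=u_1\cdot\ldots\cdot u_t$ of the terms of a minimal zero-sum sequence: the partial sums $S_0=0,S_1,\ldots,S_{t-1}$ are pairwise distinct and $S_t=0$, since an equality $S_i=S_j$ with $0\le i<j\le t$ and $(i,j)\ne(0,t)$ would make $u_{i+1}\cdot\ldots\cdot u_j$ a proper non-trivial zero-sum subsequence of $\s$. I would then single out an ordering adapted to the level $b_\ell$: run a walk from $0$ that appends an unused positive term while the current value stays below $b_\ell$ and an unused negative term otherwise, with the choice of which unused negative term to append made so that the large ones ($b_j$ with $j>\ell$) are spent near the top of the walk and the small ones ($b_j$ with $j<\ell$) near its bottom. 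The aim is to confine this walk to an explicit set of integers and then count its partial sums.

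The form of $(J_\ell)$ should fall out of that count, and making it precise is the step I expect to be the main obstacle. The $h$ values reached immediately after the $h$ positive steps are distinct, and one argues they lie in a window of integers of basic size $b_\ell$, enlarged downward by the large negative steps and partly blocked off by the small ones. A down-step of size $b_j>b_\ell$ can push the walk $b_j-b_\ell$ below the basic window, and since every up-step has size at least $a_1$, it makes at most $\lceil(b_j-b_\ell)/a_1\rceil$ additional levels reachable, producing the term $\sum_{j>\ell}\lceil(b_j-b_\ell)/a_1\rceil y_j$. A down-step of size $b_j<b_\ell$ stays inside the basic window, but because every up-step has size at most $a_n$, it forces at least $\lfloor(b_\ell-b_j)/a_n\rfloor$ of the levels below it to be occupied by partial sums that are not newly set positive records, producing the term $-\sum_{j<\ell}\lfloor(b_\ell-b_j)/a_n\rfloor y_j$. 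Bounding $|\s^+|$ above by the number of available levels minus the number of forced non-record levels, and rearranging, yields $(J_\ell)$; taking $\ell=m$ recovers Theorem~\ref{thm:La}, since the second correction sum is then empty.

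Two points need care. First, that the greedy ordering really stays inside the claimed set: I would establish this as an invariant maintained by induction on the number of terms placed, using that $\s$ contains no pair $a,-a$ (such a pair would be a proper zero-sum subsequence unless $\s=a\cdot(-a)$, which is the base case, where $(J_\ell)$ reads $1\le b_1$). Second, that the non-record levels beneath each small down-step are genuinely unavoidable, which I expect follows from an exchange argument exploiting the distinctness of all partial sums. As a fallback I would also consider induction on $|\s|$ via a Euclidean reduction: delete one copy of a positive term $a$ and one of a negative term $-b$ (necessarily $a\ne b$) and splice in a single term $a-b$ or $-(b-a)$, obtaining a minimal zero-sum sequence of strictly smaller length; there the crux becomes choosing the pair $(a,b)$ so that the right-hand side of $(J_\ell)$ changes in a controlled way, which requires tracking the floors and ceilings as the total positive sum $\sigma(\s^+)$ drops and a new extreme term possibly appears.
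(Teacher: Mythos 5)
Your reduction of $(I_k)$ to $(J_\ell)$ by negating every term is correct, and the engine you identify --- that the partial sums $S_0,\ldots,S_{t-1}$ of \emph{any} ordering of a minimal zero-sum sequence are pairwise distinct --- is the right one: with the greedy threshold at $0$ it already yields Lambert's bound $|\s^+|\leq b_m$ (the indices triggering an up-step are exactly those with $S_i\leq 0$, and these distinct values lie among the $b_m$ integers of $(-b_m,0]$). But as written the proposal stops short of a proof of $(J_\ell)$, and the gaps sit precisely where the content of Henk--Weismantel lies. The additive term is the easier half: an excursion of the walk below $-b_\ell$ is initiated by a single down-step $-b_j$ with $b_j>b_\ell$, lands above $-b_j$, and climbs back in steps of size at least $a_1$, so it contributes at most $\lceil(b_j-b_\ell)/a_1\rceil$ trigger levels in $(-b_j,-b_\ell]$; this can be made rigorous, though you must still argue that distinct copies are charged to distinct levels. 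The subtractive term is the real difficulty: your claim that each copy of $-b_j$ with $j<\ell$ ``forces at least $\lfloor(b_\ell-b_j)/a_n\rfloor$ non-record levels'' is deferred to an unspecified exchange argument, and note that this is the term that survives when $\ell=m$ (it is the $j>\ell$ sum that vanishes there), so $(J_m)$ is strictly stronger than Theorem~\ref{thm:La} and your sketch does not yet prove even that case. There is also a bookkeeping slip: with the threshold placed at $b_\ell$ as you describe, down-steps are bounded only by $b_m$, so the trigger window naively has $b_m$ integers, not $b_\ell$; recovering a base count of $b_\ell$ requires re-centering the walk and verifying that your priority rule for which $-b_j$ to play preserves the confinement invariant --- the step you yourself flag as the main obstacle.

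For context, the paper does not reprove this theorem; it is cited from \cite{HW}. What Section~\ref{sec:app} does is derive $(J_\ell)$ and $(I_k)$ from Theorem~\ref{thm:main} by the purely algebraic identity \eqref{eq:app2} and the estimate \eqref{eq:app3}, i.e.\ by writing $-\s^-_\av$ as $b_\ell$ plus two weighted averages and bounding the denominator $|\s^-|$ between $a_1$ and $a_n$. That route is far shorter than any walk argument, but it is only valid in the range \eqref{eq:appi}; outside it the intermediate inequality $-\s^-_\av\leq U_{J_\ell}$ can fail (for $\s=5\cdot 6\cdot(-4)\cdot(-7)$ one has $-\s^-_\av=5.5>5=U_{J_1}$), so it is a comparison of bounds rather than an independent proof. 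Your fallback --- induction on $|\s|$ via splicing $a$ and $-b$ into $a-b$ --- is exactly the paper's mechanism (Lemma~\ref{lem:1}), but the paper applies it to the average $-\s^-_\av$ precisely because that quantity behaves monotonically under such a derivation, whereas $U_{J_\ell}$ does not (the extremes $a_1,a_n,b_1,b_m$ and the index $\ell$ can all shift), which is the difficulty your fallback would have to overcome.
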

In this paper, we improve on Theorem~\ref{thm:HW} by proving the following theorem.
\begin{thm}\label{thm:main}
Let $\s$ be a minimal zero-sum sequence over $\Z$ with $|\s|$ finite and $|\s|>1$. 
If $\s$ is as in~\eqref{eq:stdf}, then  
\[|\s^+|\leq \left\lfloor-\s^-_\av\right\rfloor= \left\lfloor\frac{\sum_{j=1}^{m}b_jy_j}{\sum_{j=1}^{m} y_j} \right\rfloor
 \mbox{ and } 
\;|\s^-|\leq \left\lfloor\s^+_\av \right\rfloor=\left\lfloor\frac{\sum_{i=1}^{n}a_ix_i}{\sum_{i=1}^{n}x_i} \right\rfloor.\]
\end{thm}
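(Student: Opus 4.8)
The plan is to prove the bound $|\s^+|\leq \lfloor -\s^-_\av\rfloor$; the companion bound for $|\s^-|$ follows by negating $\s$, which swaps the roles of $\s^+$ and $\s^-$. Write $h=|\s^+|$ and $k=|\s^-|$, and let $\sigma^+=\sigma(\s^+)=-\sigma(\s^-)$, so that $-\s^-_\av=\sigma^+/k$. Since $h$ is an integer, it suffices to show the sharper-looking but equivalent inequality $hk\leq \sigma^+$.

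The key idea I would pursue is a counting/double-counting argument on partial sums, in the spirit of the classical proofs of Theorem~\ref{thm:La}. List the positive terms as $a_1,\ldots,a_h$ (with multiplicity, so this is just a relabeling of the terms of $\s^+$) and the negative terms as $-b_1,\ldots,-b_k$. For each $j\in[1,k]$ form the partial sum $B_j=b_1+\cdots+b_j$, so $0<B_1<B_2<\cdots<B_k=\sigma^+$. Similarly set $A_i=a_1+\cdots+a_i$ with $0<A_1<\cdots<A_h=\sigma^+$. Minimality of $\s$ is exactly the statement that no nonempty proper subsequence sums to zero; in particular, for every subset $I\subseteq[1,h]$ and $J\subseteq[1,k]$ not both empty and not both full, $\sum_{i\in I}a_i\neq \sum_{j\in J}b_j$. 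I would use this in the following form: consider the $h$ numbers $A_1,\ldots,A_{h-1},A_h=\sigma^+$ together with the $k$ numbers $\sigma^+-B_{k-1},\ldots,\sigma^+-B_1, \sigma^+-B_0=\sigma^+$, all lying in $[1,\sigma^+]$; show that suitable translates/reductions of these are forced to be distinct modulo considerations coming from minimality, so that we are fitting $h+k-1$ (or $hk$, after a finer analysis) distinct residues into a set of size $\sigma^+$.

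More precisely, the route I expect to work is to associate to each pair $(i,j)\in[1,h]\times[1,k]$ a value in $\Z/\sigma^+\Z$ — for instance the class of $A_i+B_j$, or of $A_i - B_{j-1}$ — and to argue from minimality that this map is injective. If $A_i-B_{j-1}\equiv A_{i'}-B_{j'-1}\pmod{\sigma^+}$ with $(i,j)\neq(i',j')$, then (after reindexing so the difference $A_i-A_{i'}$ equals $B_{j-1}-B_{j'-1}$ in $\Z$, using that both sides lie in $(-\sigma^+,\sigma^+)$ and that if they differ by $\sigma^+$ one can replace a sum by its complement) one obtains an equality between a sum of a proper nonempty subsequence of positive terms and a sum of a proper nonempty subsequence of negative terms, hence a proper zero-sum subsequence of $\s$, contradicting minimality. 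Injectivity of a map $[1,h]\times[1,k]\to\Z/\sigma^+\Z$ gives $hk\leq \sigma^+$, which is the desired bound.

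The main obstacle is getting the injectivity statement exactly right: the naive map $A_i+B_j$ is \emph{not} injective on the nose (e.g. boundary cases where $i$ or $j$ is extremal, or where the ambient equality holds only modulo $\sigma^+$ rather than over $\Z$), so I will need to choose the index ranges and the precise affine form of the invariant carefully, and to handle the wrap-around case by the complementation trick (replacing $I$ by $[1,h]\setminus I$ turns an equality mod $\sigma^+$ into an honest equality over $\Z$). A secondary point to check is that the resulting subsequence is genuinely \emph{proper and nonempty} — i.e. that one does not accidentally produce the trivial or the full sequence — which is what forces the strict inequalities among the partial sums and rules out the degenerate pair $(i,j)=(i',j')$. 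Once the combinatorial core is pinned down, deducing $|\s^+|\leq\lfloor\sigma^+/k\rfloor$ from $hk\leq\sigma^+$, and then rewriting $\sigma^+/k$ as $\bigl(\sum_j b_j y_j\bigr)/\bigl(\sum_j y_j\bigr)=-\s^-_\av$ in the notation of~\eqref{eq:stdf}, is routine, and the second inequality is immediate by symmetry.
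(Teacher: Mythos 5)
Your proposal is correct in substance, but it takes a genuinely different route from the paper. The paper proves the theorem by induction on $|\s|$: using Lemma~\ref{lem:1} it performs an $(a_n,-b_m)$-derivation (combining the largest positive term with the most negative term), applies the induction hypothesis to the shorter minimal sequence $\r$, and observes that re-inserting $-b_m$ into $\r^-$ can only increase $-\r^-_\av$ because $b_m=\|\s^-\|_\ty$; the bound on $|\s^-|$ then follows from the one on $|\s^+|$ via $\sigma(\s^+)=-\sigma(\s^-)$, exactly as in your reduction of both bounds to the single inequality $hk\le\sigma^+$. Your argument is instead a direct, non-inductive counting argument, and the details you flag as the ``main obstacle'' do go through with the invariant $(i,j)\mapsto A_i-B_{j-1}\bmod \sigma^+$ on $[1,h]\times[1,k]$: if $A_i-B_{j-1}\equiv A_{i'}-B_{j'-1}\pmod{\sigma^+}$ with $(i,j)\ne(i',j')$, then $A_i-A_{i'}$ and $B_{j-1}-B_{j'-1}$ both lie strictly between $-\sigma^+$ and $\sigma^+$, so they are either equal (yielding a block identity $a_{i'+1}+\dots+a_i=b_{j'}+\dots+b_{j-1}$) or differ by exactly $\sigma^+$ (yielding the same after replacing the $b$-block by its complement in $[1,k]$); in either case the positive part omits $a_1$ and the negative part is nonempty and proper, so one obtains a proper nontrivial zero-sum subsequence of $\s$, contradicting minimality. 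Hence the map is injective and $hk\le\sigma^+$. Each approach has its merits: the paper's induction is shorter and the derivation operation it uses is exactly the covering relation of the poset $\P_n$ studied in Section~\ref{sec:appl}, while your injection needs no derivation lemma and exhibits $hk$ as a count of distinct residues modulo $\sigma^+$, which makes both bounds and their common equality case (the map being a bijection) transparent.
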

The bounds in theorems~\ref{thm:La}--\ref{thm:main} are all tight for the
minimal zero-sum sequences 
\[\s=a^{[\frac{b}{\gcd(a,b)}]}\cdot (-b)^{[\frac{a}{\gcd(a,b)}]},\]
for all positive integers $a$ and $b$.
On the other hand, if we consider the minimal zero-sum sequence $\s=3^{[1]}\cdot4^{[2]}\cdot(-1)^{[2]}\cdot(-9)^{[1]}$, 
then Theorem~\ref{thm:La} yields $|\s^+|\leq 9$ and $|\s^-|\leq 4$, Theorem~\ref{thm:HW} yields $|\s^+|\leq 4$ and 
$|\s^-|\leq 4$, while Theorem~\ref{thm:main} yields the tight bounds $|\s^+|\leq 3$ and $|\s^-|\leq 3$.

In Section~\ref{sec:main}, we prove Theorem~\ref{thm:main} by refining the method of
Sahs et. al~\cite{SST}. In Section~\ref{sec:appl}, we define a natural partial order on 
the set $\A(\B_0)$ of minimal zero-sum sequences and discuss its relevance. In Section~\ref{sec:app},
we show that the bounds in Theorem~\ref{thm:main} are always sharper or equivalent to the bounds 
in Theorem~\ref{thm:HW}.
\section{Proofs of Theorem~\ref{thm:main}}\label{sec:main}\
Let $G$ be an additive abelian group, and let $\s=s_1\cdot s_2 \ldots \cdot s_t\in\F(G)$. 
For all $i,j\in[1,t]$ such that $i\not=j$, 
let $\s'$ be the sequence obtained by removing the terms $s_i$ and $s_j$ from $\s$ and inserting 
(anywhere) the term 
$s_i+s_j$. We call this process an {\em $(s_i,s_j)$-derivation} and say that 
$\s'$ is {\em $(s_i,s_j)$-derived} from $\s$. We also say that $\s'$ is {\em derived} from $\s$
without specifying the pair $(s_i,s_j)$.
For instance, if $\s=2^{[3]}\cdot(-3)^{[2]}$, then $\s'=2^{[2]}\cdot(-3)\cdot(-1)$ is $(2,-3)$-derived 
from $\s$, and $\s'=4^{[1]}\cdot2^{[1]}\cdot(-3)^{[2]}$ is $(2,2)$-derived from $\s$.

We will use the following lemma, which is a special case of Lemma~2 in Sahs et. al~\cite{SST}.
For the sake of completeness, we include a very short proof of it here.
\begin{lem}\label{lem:1}\
Let $G$ be an additive abelian group.
Let $\s=s_1\cdot s_2 \ldots \cdot s_t$ be a minimal zero-sum sequence over $G$, and let $i,j\in[1,t]$ 
be such that $i\not=j$. 
If $\s'$ is $(s_i,s_j)$-derived from $\s$, then $\s'$ is also a minimal zero-sum sequence over $G$. 
\end{lem}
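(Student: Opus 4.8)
The plan is to check the two defining properties of a minimal zero-sum sequence in turn. That $\s'$ is zero-sum is immediate: an $(s_i,s_j)$-derivation replaces the terms $s_i$ and $s_j$ by the single term $s_i+s_j$, so $\sigma(\s')=\sigma(\s)-s_i-s_j+(s_i+s_j)=\sigma(\s)=0$. The real content of the lemma is that $\s'$ contains no proper non-trivial zero-sum subsequence, and I would prove this by contradiction: suppose $\t'$ is such a subsequence of $\s'$. Write $\r$ for the subsequence of $\s$ obtained by deleting the two terms $s_i$ and $s_j$, so that $\s'=\r\cdot(s_i+s_j)$ and $|\r|=|\s|-2$.

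Now I would split into two cases according to whether the derived term $s_i+s_j$ divides $\t'$. If it does not, then $\t'$ already divides $\r$, hence $\s$; by assumption $\t'$ is non-trivial and zero-sum, and $|\t'|\leq|\r|=|\s|-2<|\s|$, so $\t'$ is a proper non-trivial zero-sum subsequence of $\s$, contradicting the minimality of $\s$. If $s_i+s_j$ does divide $\t'$, let $\t$ be obtained from $\t'$ by deleting one occurrence of $s_i+s_j$ and adjoining the two terms $s_i$ and $s_j$; using $\t'\mid\s'=\r\cdot(s_i+s_j)$ one checks directly that $\t\mid\r\cdot s_i\cdot s_j=\s$, while $\sigma(\t)=\sigma(\t')-(s_i+s_j)+s_i+s_j=0$ and $|\t|=|\t'|+1\geq2$ make $\t$ a non-trivial zero-sum sequence. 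Since $\t'$ is a proper subsequence of $\s'$ we have $|\t'|\leq|\s'|-1=|\s|-2$, so $|\t|\leq|\s|-1<|\s|$, and $\t$ is again a proper non-trivial zero-sum subsequence of $\s$, contradicting minimality.

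Since everything reduces to bookkeeping with subsequences, I do not expect a genuine obstacle; the two points that need slight care are the length counts that force the subsequence of $\s$ produced in each case to be strictly shorter than $\s$ (this is exactly where the hypothesis that $\t'$ is a \emph{proper} subsequence of $\s'$ enters), and the harmless degenerate possibility $s_i+s_j=0$, in which minimality of $\s$ forces $\s=s_i\cdot s_j$ and $\s'=0^{[1]}$, a sequence whose only proper subsequence is trivial and which is therefore minimal.
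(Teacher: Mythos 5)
Your proof is correct and follows essentially the same route as the paper: both argue by contradiction, converting a hypothetical proper non-trivial zero-sum subsequence of $\s'$ into a proper non-trivial zero-sum subsequence of $\s$. The only cosmetic difference is that the paper sidesteps your Case 2 by writing $\s'=\r\cdot\t$ and passing to whichever factor does \emph{not} contain the new term $s_i+s_j$ (which therefore already divides $\s$), whereas you handle that case directly by splitting $s_i+s_j$ back into $s_i\cdot s_j$; both versions of the bookkeeping are valid.
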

\begin{proof}
By definition $\s'$ is a zero-sum sequence over $G$ since $s_i+s_j\in G$ and 
\[\sigma(\s')=\sigma(s)-s_i-s_j+(s_i+s_j)=\sigma(\s)=0.\]
Suppose that $\s'$ is not minimal. Then there exist nontrivial zero-sum subsequences $\r$ and $\t$
such that $\s'=\r\cdot \t$, and the specific term $s_i+s_j$ (there may be other copies of
$s_i+s_j$ in $\s'$ and $\s$) is a subsequence of either $\r$ or $\t$, and not both.  
Thus, either $\r$ or $\t$ is a proper zero-sum subsequence of $\s$. This would contradict 
the minimality of $\s$. Thus, $\s'$ is minimal zero-sum sequence.   
\end{proof}
We now prove our main theorem.
\begin{proof}[Proof of Theorem~\ref{thm:main}]\

Let $\s$ be a minimal zero-sum sequence over $\Z$ with $|\s|$ finite and $|\s|>1$. Then, 
there exist positive integers $a_1,\ldots,a_n$ and $b_1,\ldots,b_m$ with $a_1\leq \ldots\leq a_n$ and 
$b_1\leq\ldots\leq b_m$, such that  
\begin{equation*}
\s^+=\prod_{i=1}^n a_i^{[x_i]},\; \s^-=\prod_{j=1}^m (-b_j)^{[y_j]},\;\mbox{ and } \s=\s^+\cdot\s^-,
\end{equation*}
where $x_i$ and $y_j$ are positive integers for all $i\in[1,n]$ and $j\in[1,m]$.

We shall prove by induction on $|\s|\geq 2$ that 
\begin{equation}\label{eq:ind}
|\s^+|\leq -\s^-_\av\quad \mbox{ and }\quad  |\s^-|\leq \s^+_\av.
\end{equation}
If $|\s|=2$, then we must have $m=n=1$, $\s=a_1\cdot(-b_1)$, and $a_1-b_1=0$. 
Since $a_1>0$ and $b_1>0$, the statement~\eqref{eq:ind} clearly holds.
Assume that $|\s|\geq 2$ and that~\eqref{eq:ind} holds for all minimal zero-sum 
sequence $\r$ such that $2\leq |\r|<|\s|$.

If $a_i=b_j$ for some $i\in[1,n]$ and $j\in[1,m]$, then we must have $\s=a_i\cdot(-b_j)$. 
Otherwise, $\s'=a_i\cdot(-b_j)$ would be a proper zero-sum subsequence of $\s$, which
would contradict the minimality of $\s$.
Thus, we may assume that 
\[\{a_1,\ldots,a_n\}\cap\{b_1,\ldots,b_m\}=\emptyset.\] 
Without loss of generality, we may also assume that $a_n=\|\s^+\|_\ty>\|\s^-\|_\ty=b_m$. 

To prove the inductive step, we first show that $|\s^+|\leq -\s^-_\av$.
Since $x_n>0$, $y_m>0$, and $a_n-b_m>0$, we can use Lemma~\ref{lem:1} to perform an 
$(a_n,-b_m)$-derivation from $\s$, and obtain the minimal zero-sum sequence
\[\r=(a_n-b_m)^{[1]}\cdot a_n^{[x_n-1]}\cdot\prod_{i=1}^{n-1}a_i^{[x_i]}\cdot
(-b_m)^{[y_m-1]}\prod_{j=1}^m (-b_j)^{[y_j]},\]
where we omit the term $a_n$ if $x_n=1$ and the term $(-b_m)$ if $y_m=1$. 

Since $|\r|=|\s|-1$, it follows from the induction hypothesis that 
\begin{equation}\label{eq:c1}
|\r^+|=1+(x_n-1)+\sum_{i=1}^{n-1}x_i=\sum_{i=1}^n x_i\leq -\r^-_\av=
\frac{(y_m-1)b_m+\sum_{j=1}^{m-1} y_jb_j}{(y_m-1)+\sum_{j=1}^{m-1} y_j}.
\end{equation}

Since $b_m=\|\s^-\|_\ty\geq \|\r^-\|_\ty$, it follows from~\eqref{eq:c1} that
\[|\r^+|=\sum_{i=1}^n x_i\leq 
\frac{b_m+(y_m-1)b_m+\sum_{j=1}^{m-1} y_jb_j}{1+(y_m-1)+\sum_{j=1}^{m-1} y_j}
=\frac{-\sigma(\s^-)}{|\s^-|}=-\s^-_\av.\]
Thus, 
\begin{equation}\label{eq:c2}
\s^+=\sum_{i=1}^n x_i=|\r^+|\leq -\s^-_\av.
\end{equation}

\bs
Next, we show that $|\s^-|\leq \s^+_\av$.
Since $\sigma(\s)=0$, it follows that $\sigma(\s^+)=-\sigma(\s^-)$. 
This observation and~\eqref{eq:c2} yield 
\begin{equation}\label{eq:c3}
|\s^+|\leq -\s^-_\av =\frac{-\sigma(\s^-)}{|\s^-|}=\frac{\sigma(\s^+)}{|\s^-|}
\Longrightarrow |\s^-|\leq \frac{\sigma(\s^+)}{|\s^+|}=\s^+_\av.
\end{equation}
Since $|\s^+|$ and $|\s^-|$ are integers, the theorem follows from~\eqref{eq:c2} 
and~\eqref{eq:c3} by taking the floors of $\s^+_\av$ and $-\s^-_\av$. 

\end{proof}
\begin{rem}
Let $\s$ is as in~\eqref{eq:stdf} and suppose that there exists $t\in[1,m]$ such that 
\begin{equation}\label{eq:rem1}
a_n>b_t>-\s^-_\av=\frac{\sum_{j=1}^{m}b_jy_j}{\sum_{j=1}^{m} y_j}.
\end{equation}
Then, the $(a_n,-b_t)$-derivation on $\s$ yields the minimal zero-sum sequence
\[\r=(a_n-b_t)^{[1]}\cdot a_n^{[x_n-1]}\cdot\prod_{i=1}^{n-1}a_i^{[x_i]}\cdot
(-b_t)^{[y_t-1]}\prod_{j=1,\;j\not=t}^m (-b_j)^{[y_j]}.\]
Thus, by applying Theorem~\ref{thm:main} to $\r$, we obtain
\begin{equation}\label{eq:rem2}
|\s^+|=\sum_{i=1}^n x_i=|\r^+|\leq \left\lfloor-\r^-_\av\right\rfloor.
\end{equation}
Since $-\r^-_\av < -\s^-_\av$ (by the definition of $\r$ and~\eqref{eq:rem1}),
the bound for $|\s^+|$ in~\eqref{eq:rem2} is sometimes better than the bound 
$|\s^+|\leq \left\lfloor-\s^-_\av\right\rfloor$ given by Theorem~\ref{thm:main}.
By symmetry, we may sometimes obtain a better bound for $|\s^-|$ in a similar manner.
\end{rem}
\section{The structure of the minimal zero-sum sequences}\label{sec:appl}\
Let $G_0$ be a finite subset of $\Z$. 
We are interested in finding a natural structure on the set $\A(\B_0)$ of minimal zero-sum 
sequences in $\B_0=\B(G_0)$.
As mentioned in the introduction, $\A(\B_0)$ is also the set of atoms of the Krull monoid 
$\B_0$. There are other interesting interpretations of $\A(\B_0)$.
In the context of Diophantine linear equations (e.g., see~\cite{HW,La,Pot}), $\A(\B_0)$ 
correspond to the union of all {\em Hilbert bases}\footnote{This union is also known as the 
{\em Graver basis} of the corresponding {\em toric ideal} (e.g., see~\cite{StTh}).}, 
which are minimal generating sets of all the solutions.
In the context integer partitions, each sequence $\s=a_1\cdot\ldots\cdot a_p\cdot(-b_1)\cdot\ldots\cdot(-b_q)\in\A(\B_0)$ 
such that $p+q\geq 3$, $a_i>0$ for $i\in[1,p]$, and $b_j>0$ for $j\in[1,q]$, corresponds to the 
{\em primitive partition identity} $a_1+\ldots+a_p=b_1+\ldots+b_q$ (see~\cite[p.~1]{DGS}). 
Primitive partition identities were studied by Diaconis et al.~\cite{DGS} who were motivated applications in
 Gr\"obner bases, computational statistics, and integer programming (e.g., see~\cite{St,StTh}). 

In the process of characterizing $\A(\B_0)$, we assume that $\s=s_1\cdot\ldots\cdot s_t\in\A(\B_0)$ is equivalent to
$-\s=(-s_1)\ldots (-s_t)\in\A(\B_0)$ and we only include one of them in $\A(\B_0)$.
For any positive integer $n$, defined the {\em $n$-derived set}, $\D_n(\s)$, of $\s=s_1\cdot\ldots\cdot s_t\in\B(\Z)$ by
\begin{equation*}
\D_n(\s)=\left\{\s':\; i,j\in[1,t],\, i\not=j,\mbox{ $\s'$ is $(s_i,s_j)$-derived, and $\|\s'\|_\ty\leq n$}\right\}.
\end{equation*}
Given $\r,\s\in \B(\Z)$, we write  $\r\prec_n\s$ if and only if $\r=\s$ or $\r\in\D_n(\s)$.

The following proposition is a direct consequence of Lemma~\ref{lem:1}. 
\begin{prop}\label{prop:1}
Let $n$ be a positive integer, $G_0=[-n,n]$, and $\B_0=\B(G_0)$.\\
\n $(i)$  If $\s\in \A(\B_0)$, then $\D_n(\s)\subseteq \A(\B_0)$.\\
\n $(ii)$ $\P_n=\left(\A(\B_0),\prec_n\right)$ is a poset.
\end{prop}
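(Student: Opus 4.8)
The plan is to verify the two assertions directly from Lemma~\ref{lem:1} and from the definition of $\prec_n$. For part $(i)$, let $\s\in\A(\B_0)$ and let $\s'\in\D_n(\s)$. By definition of $\D_n(\s)$, the sequence $\s'$ is $(s_i,s_j)$-derived from $\s$ for some distinct $i,j\in[1,t]$, and moreover $\|\s'\|_\ty\leq n$, so that every term of $\s'$ lies in $G_0=[-n,n]$ and hence $\s'\in\F(G_0)$. Since $\s$ is a minimal zero-sum sequence over $\Z$ and $\s'$ is $(s_i,s_j)$-derived from it, Lemma~\ref{lem:1} (applied with $G=\Z$) shows that $\s'$ is again a minimal zero-sum sequence; combined with $\s'\in\F(G_0)$ this gives $\s'\in\A(\B_0)$. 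As $\s'$ was arbitrary, $\D_n(\s)\subseteq\A(\B_0)$.

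For part $(ii)$, I would check the three poset axioms for $\prec_n$ on the set $\A(\B_0)$. Reflexivity is immediate since $\r\prec_n\s$ holds whenever $\r=\s$. For antisymmetry and transitivity the key observation is that every derivation strictly decreases the length: if $\s'$ is $(s_i,s_j)$-derived from $\s$ then $|\s'|=|\s|-1$, because two terms are removed and one is inserted. Hence $\r\in\D_n(\s)$ forces $|\r|<|\s|$, so $\r\prec_n\s$ implies $|\r|\leq|\s|$, with equality only when $\r=\s$. Antisymmetry follows: if $\r\prec_n\s$ and $\s\prec_n\r$ then $|\r|\leq|\s|\leq|\r|$, so $|\r|=|\s|$, which forces $\r=\s$.

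Transitivity requires a little more care, since $\D_n$ is defined by a single derivation step rather than by iterated derivations. Suppose $\r\prec_n\s$ and $\s\prec_n\mathbf{q}$; I must show $\r\prec_n\mathbf{q}$. If $\r=\s$ or $\s=\mathbf{q}$ the conclusion is trivial, so assume $\r\in\D_n(\s)$ and $\s\in\D_n(\mathbf{q})$. Then $\r$ is obtained from $\mathbf{q}$ by two successive derivation steps, so $|\r|=|\mathbf{q}|-2$. If $|\mathbf{q}|=2$ this is impossible (a zero-sum sequence over $\Z$ with a term equal to $0$ would be non-minimal unless it is $0^{[1]}$, and in any case length $0$ is excluded), so in fact the only genuine case to rule out is the one where a two-step reduction would be needed; more precisely, I expect that the intended poset is the one whose order is "$\r=\s$ or $\r$ is obtained from $\s$ by a (possibly empty) sequence of derivations with all intermediate infinite norms at most $n$", and that the relation $\prec_n$ as literally written is meant to be interpreted as that transitive closure. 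The main obstacle is exactly this point: to make $(\A(\B_0),\prec_n)$ a genuine poset one should read $\prec_n$ as the reflexive–transitive closure of single-step $n$-bounded derivation, and then transitivity is automatic by concatenating derivation sequences, antisymmetry follows from the strict length drop at each step, and part $(i)$ extends by induction on the number of steps using Lemma~\ref{lem:1}. With that reading in place, all three axioms hold and $\P_n$ is a poset.
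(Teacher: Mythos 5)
Your proposal is correct, and for part $(i)$ it is exactly the argument the paper intends: the paper offers no proof beyond declaring the proposition ``a direct consequence of Lemma~\ref{lem:1},'' and your verification (Lemma~\ref{lem:1} gives minimality and the zero-sum property, while the condition $\|\s'\|_\ty\leq n$ guarantees $\s'\in\F(G_0)$ and hence $\s'\in\A(\B_0)$) fills in precisely what is being left implicit. For part $(ii)$ you have put your finger on a real imprecision in the paper rather than a flaw in your own argument: since an $(s_i,s_j)$-derivation always satisfies $|\s'|=|\s|-1$, the relation $\prec_n$ as literally defined can never be transitive once a two-step chain exists (a two-step descendant has length $|\s|-2$ and so cannot lie in $\D_n(\s)$), and such chains clearly occur --- Figure~\ref{fig1} of the paper displays chains of length greater than one in $\P_3$. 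So the single-step relation is really the covering relation, and the partial order must be read as its reflexive--transitive closure; with that reading, reflexivity is built in, antisymmetry follows from the strict length drop exactly as you argue, and transitivity is automatic by concatenating derivation chains. One small criticism: your digression about the case $|\mathbf{q}|=2$ in the transitivity discussion is muddled and unnecessary --- the clean statement is simply that the literal relation fails transitivity for length reasons and the closure is the intended order --- but this does not affect the correctness of your conclusion.
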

For instance, if $\s=2^{[3]}\cdot (-3)^{[2]}$, then Figure~\ref{fig1} shows the poset $\P_3$.
Note that $\s'=2^{[3]}\cdot (-6)$ is $(-3,-3)$-derived from $\s$, but $\s'\not\in\D_3(\s)$ since $\|\s'\|_\ty=6>3$.

\bs
\begin{figure}[ht!]
\psfrag{1}[][]{\tiny $2^{[3]}\cdot (-3)^{[2]}$}
\psfrag{2}[][]{\tiny $2^{[2]}\cdot(-3)\cdot (-1)$}
\psfrag{3}[][]{\tiny $2\cdot 1\cdot (-3)$}
\psfrag{4}[][]{\tiny $2\cdot (-2)$}
\psfrag{5}[][]{\tiny $0$}
\psfrag{6}[][]{\tiny $1^{[3]}\cdot (-3)$}
\psfrag{7}[][]{\tiny $\quad 1^{[2]}\cdot(-2)\simeq2\cdot(-1)^{[2]}$}
\psfrag{8}[][]{\tiny $3\cdot (-3)$}
\psfrag{9}[][]{\tiny $1\cdot (-1)$}
\centerline{\includegraphics[width=2.6in]{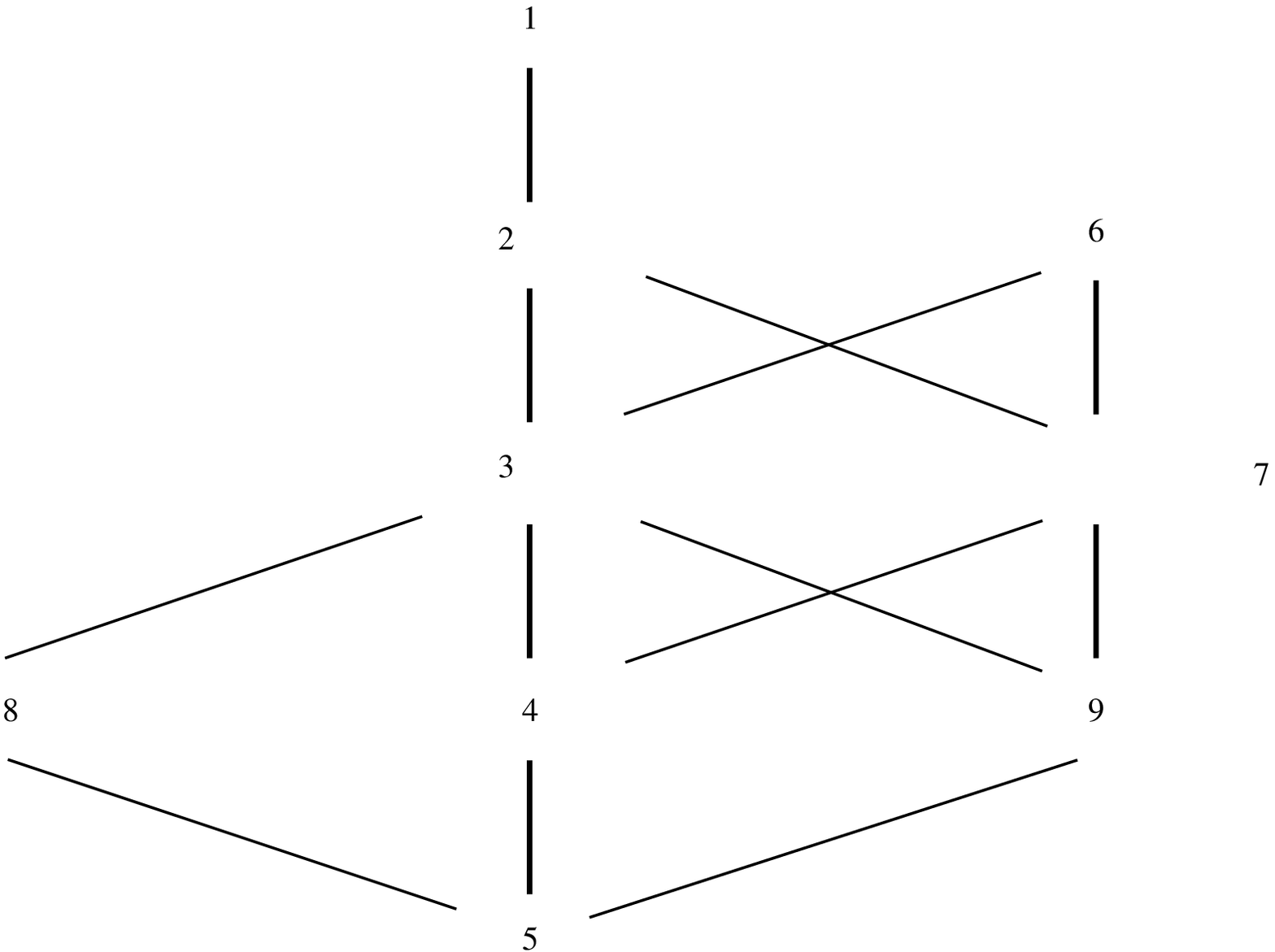}}
\caption{The poset $\P_3$}\label{fig1}
\end{figure}

Let $\M_n$ be the set of maximal elements of the poset $\P_n$ in Proposition~\ref{prop:1}, i.e., 
$\M_n$ contains all minimal sequences $\r\in\A(\B_0)$ that cannot be derived from any $\s\in\A(\B_0)$. 
Then the following proposition is immediate.
\begin{prop}\label{prop:2}
Let $n$ be a positive integer, $G_0=[-n,n]$, and $\B_0=\B(G_0)$.
If $\Q$ is a set such that $\M_n\subseteq \Q\subseteq \A(\B_0)$, then 
\[\A(\B_0)=\Q\cup\left(\bigcup\limits_{\s\in\Q}\D_n(\s)\right),\]
where we assume that $\s\in \A(\B_0)$ is equivalent to $-\s\in \A(\B_0)$.
\end{prop}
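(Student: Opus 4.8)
The plan is to prove Proposition~\ref{prop:2} by a straightforward double inclusion, using Proposition~\ref{prop:1} to handle one direction and a minimality argument to handle the other. First I would observe that the right-hand side is contained in $\A(\B_0)$: since $\Q\subseteq\A(\B_0)$ by hypothesis, and since for each $\s\in\Q$ we have $\D_n(\s)\subseteq\A(\B_0)$ by Proposition~\ref{prop:1}$(i)$, the union $\Q\cup\bigcup_{\s\in\Q}\D_n(\s)$ is a subset of $\A(\B_0)$. This is the easy inclusion and requires nothing beyond what is already established.

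For the reverse inclusion, I would take an arbitrary $\r\in\A(\B_0)$ and show it lies on the right-hand side. Consider the relation $\prec_n$ restricted to $\A(\B_0)$; by Proposition~\ref{prop:1}$(ii)$ this is a poset, and since $G_0=[-n,n]$ is finite, $\A(\B_0)$ is finite (each atom has bounded terms and, by Theorem~\ref{thm:main} or even Theorem~\ref{thm:La}, bounded length), so every element of the poset lies below some maximal element. Hence there exists $\s\in\M_n$ with $\r\prec_n\s$. Since $\M_n\subseteq\Q$, we have $\s\in\Q$. Now $\r\prec_n\s$ means either $\r=\s$, in which case $\r\in\Q$, or $\r\in\D_n(\s)$, in which case $\r\in\bigcup_{\s'\in\Q}\D_n(\s')$. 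Either way $\r$ belongs to the right-hand side, which completes the proof. One should also note the convention that $\s\in\A(\B_0)$ is identified with $-\s\in\A(\B_0)$, so the argument is carried out on equivalence classes and $\D_n$, $\M_n$, $\Q$ are all understood up to this identification; this is purely bookkeeping and does not affect the logic.

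The main point requiring a little care — and the closest thing to an obstacle — is justifying that every element of $\P_n$ lies below a maximal element, i.e. that $\P_n$ has the property that maximal elements are reaching. Since I claim $\A(\B_0)$ is finite, this is automatic: in a finite poset, starting from any element and repeatedly passing to a strictly larger element (here, replacing $\r$ by some $\s$ with $\r\in\D_n(\s)$ whenever such $\s$ exists) must terminate, and it terminates precisely at a maximal element. So the real content is the finiteness of $\A(\B_0)$ when $G_0=[-n,n]$, which follows because each atom $\s$ satisfies $\|\s\|_\infty\leq n$ and $|\s^+|,|\s^-|\leq n$ by Theorem~\ref{thm:main} (using $-\s^-_\av\leq\|\s^-\|_\infty\leq n$ and symmetrically), bounding the length and hence the number of atoms. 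I would include this observation explicitly, as it is the one nontrivial ingredient; the rest is a routine set-theoretic argument and the statement is, as the authors say, immediate once Proposition~\ref{prop:1} and the finiteness are in hand.
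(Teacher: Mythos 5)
The paper itself offers no argument here (the proposition is declared ``immediate''), so the only question is whether your argument is sound. Your two inclusions are the right skeleton, and your observation that $\A(\B_0)$ is finite (terms bounded by $n$, length bounded by $2n$ via Theorem~\ref{thm:main} or Theorem~\ref{thm:La}) is correct and is indeed the ingredient needed to make descending chains terminate. The problem is the step where you pass from ``$\r$ lies below some maximal element $\s$ of the poset'' to ``$\r=\s$ or $\r\in\D_n(\s)$.'' The descending-chain argument you describe produces a chain $\r=\r_0\in\D_n(\r_1)$, $\r_1\in\D_n(\r_2)$, \dots, $\r_{k-1}\in\D_n(\s)$ with $\s$ maximal; it does \emph{not} produce $\r\in\D_n(\s)$, because the one-step derivation relation is not transitive: a derivation shortens a sequence by exactly one term, so $\D_n(\s)$ contains only sequences of length $|\s|-1$, while the chain can have any length $k$. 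Concretely, for $n=3$ and $\Q=\M_3=\big\{2^{[3]}\cdot(-3)^{[2]},\,1^{[3]}\cdot(-3)\big\}$, the atom $1\cdot(-1)$ has length $2$ while the maximal elements have lengths $5$ and $4$, so $1\cdot(-1)\notin\Q\cup\bigcup_{\s\in\Q}\D_3(\s)$ if $\D_3$ is read literally as the one-step derived set. With the literal definitions your proof breaks at exactly this point --- and indeed the statement itself fails.

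The resolution is that $\prec_n$ must be read as the reflexive--transitive closure of the one-step relation (this is already forced by Proposition~\ref{prop:1}$(ii)$: the literal relation is not transitive, e.g.\ $3\cdot(-3)\in\D_3(2\cdot1\cdot(-3))$ and $2\cdot1\cdot(-3)\in\D_3(1^{[3]}\cdot(-3))$, yet $3\cdot(-3)\notin\D_3(1^{[3]}\cdot(-3))$ for length reasons), and correspondingly $\D_n(\s)$ in Proposition~\ref{prop:2} must be the \emph{iterated} derived set $\{\r:\;\r\prec_n\s,\ \r\neq\s\}$. Under that reading your argument is complete and correct: finiteness puts every atom below a maximal one, maximality forces that element into $\M_n\subseteq\Q$, and the down-set of $\Q$ is all of $\A(\B_0)$. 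You should say explicitly which reading of $\D_n$ you are using, because as written your proof silently switches between the one-step definition (in the easy inclusion, where Proposition~\ref{prop:1}$(i)$ applies verbatim and needs only induction along a chain to extend to the closure) and the transitive closure (in the hard inclusion), so the two halves are not about the same set.
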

For instance, Figure~\ref{fig1} shows that
\[\M_3=\big\{2^{[3]}\cdot (-3)^{[2]},1^{[3]}\cdot (-3)^{[1]}\big\}.\] 
We also verified that 
\begin{equation}\label{eq:Mn}
\M_n\subseteq \left\{a^{[\frac{b}{\gcd(a,b)}]}\cdot (-b)^{[\frac{a}{\gcd(a,b)}]}:\; a,b\in[1,n]\right\}
\;\mbox{ for $n\in[1,5]$}.
\end{equation} 
However, by using the 4ti2--software package~\cite{4ti2}, we found that~\eqref{eq:Mn} does not 
hold for $n=6$. In particular,  
\begin{align*}
\M_6-\left\{a^{[\frac{b}{\gcd(a,b)}]}\cdot (-b)^{[\frac{a}{\gcd(a,b)}]}:\; a,b\in[1,6]\right\}
=\Big\{ & 2^{[2]}\cdot 3^{[1]}\cdot 5^{[1]}\cdot (-6)^{[2]},\\
		 & 1^{[1]}\cdot 3^{[1]}\cdot 4^{[2]}\cdot (-6)^{[2]}\Big\}.
\end{align*}

Determining $\M_n$ (or a small enough superset of $\M_n$), for all $n>0$, would directly yield an algorithm 
for generating $\P_n$, and an approach for computing the cardinality of $\A(\B_0)$ (e.g., by studying the 
M\"obius function of $\P_n$).
\section{Comparison of the bounds in Theorems~\ref{thm:HW}\&\ref{thm:main}}\label{sec:app}\
In this section, we show that the bounds in Theorem~\ref{thm:main} are in general 
sharper or equivalent to the bounds in Theorem~\ref{thm:HW}. 
To do this, we will show that it is enough to compare those two theorems 
for sequences $\s$ (where $\s$ is as in~\eqref{eq:stdf}) such that
\begin{equation}\label{eq:appi}
 a_1\leq |\s^-|=\sum_{j=1}^{m}y_j \leq a_n  \mbox{ and } b_1\leq |\s^+|=\sum_{i=1}^{n}x_i\leq b_m.
\end{equation}

First, note that it follows from Theorem~\ref{thm:La} that 
\begin{equation}\label{eq:app0}
\sum_{j=1}^{m}y_j=|\s^-| \leq a_n \mbox{ and } \sum_{i=1}^{n}x_i=|\s^+|\leq b_m.
\end{equation}
Let $\ell\in[1,m]$, $k\in[1,n]$, and consider the upper bounds
\begin{equation}\label{eq:ul} 
 U_{J_\ell}= b_\ell -\sum_{j=1}^{\ell-1}\left\lfloor\frac{b_\ell-b_j}{a_n}\right\rfloor y_j
 + \sum_{j=\ell+1}^{m}\left\lfloor\frac{b_\ell-b_j}{a_1}\right\rfloor y_j
\end{equation}
\begin{equation}\label{eq:uk} 
U_{I_k}=a_k -\sum_{i=1}^{k-1}\left\lfloor\frac{a_k-a_i}{b_m}\right\rfloor x_i
+\sum_{i=k+1}^{n}\left\lceil \frac{a_i-a_k}{b_1}\right\rceil x_i
\end{equation}
in the inequalities $(J_\ell)$  and $(I_k)$ in Theorem~\ref{thm:HW}, where $a_1\leq\ldots\leq a_n$ 
and $b_1\leq\ldots\leq b_m$. 

Without loss of generality, assume that $b_m\geq a_n$. Then 
$\left\lfloor\frac{a_k-a_i}{b_m}\right\rfloor=0$ for $1\leq i<k\leq n$, and it follows from~\eqref{eq:uk} that 
\begin{equation}\label{eq:app1.1} 
U_{I_k}\geq a_k\geq a_1\mbox{ for all $k\in[1,n]$}.
\end{equation}
Thus, it follows from~\eqref{eq:app0}, \eqref{eq:app1.1}, and the 
fact that $\s^+_\av\geq a_1$, that Theorem~\ref{thm:HW} and~\ref{thm:main} 
can only give meaningful upper bounds for $|\s^-|$ if
\begin{equation}\label{eq:app1}
a_1\leq |\s^-|=\sum_{j=1}^{m}y_j \leq a_n.
\end{equation}
Next, it follows from the definition of $-\s^-_\av$ in~\eqref{def:not} that
\begin{align}\label{eq:app2}
-\s^-_\av=\frac{-\sigma(\s^-)}{|\s^-|}
&= \frac{\sum_{j=1}^{m}b_jy_j}{\sum_{j=1}^{m}y_j}\cr
&= \frac{\sum_{j=1}^{m}b_\ell y_j-\sum_{j=1}^{\ell-1}(b_\ell-b_j)y_j+\sum_{j=\ell+1}^{m}(b_j-b_\ell)y_j}{\sum_{j=1}^{m}y_j}\cr
&= b_\ell-\frac{\sum_{j=1}^{\ell-1}(b_\ell-b_j)y_j}{\sum_{j=1}^{m}y_j}+\frac{\sum_{j=\ell+1}^{m}(b_j-b_\ell)y_j}{\sum_{j=1}^{m}y_j}.
\end{align}
Since $a_1\leq\ldots\leq a_n$ and $b_1\leq\ldots\leq b_m$, it follows from~\eqref{eq:app1} and~\eqref{eq:app2} that
\begin{align}\label{eq:app3}
-\s^-_\av 
&\leq  b_\ell-\sum_{j=1}^{\ell-1}\frac{(b_\ell-b_j)y_j}{a_n}+
\sum_{j=\ell+1}^{m}\frac{(b_j-b_\ell)y_j}{a_1}\cr
&\leq b_\ell -\sum_{j=1}^{\ell-1}\left\lfloor\frac{b_\ell-b_j}{a_n}\right\rfloor y_j+
\sum_{j=\ell+1}^{m}\left\lceil\frac{b_j-b_\ell}{a_1}\right\rceil y_j=U_{J_\ell}.
\end{align}
Thus, Theorem~\ref{thm:main} and~\eqref{eq:app3} yield
\begin{equation}\label{eq:fin1}
|\s^+|\leq \left\lfloor-\s^-_\av\right\rfloor\leq -\s^-_\av \leq U_{J_\ell}
\end{equation}
which implies inequality $(J_\ell)$ in Theorem~\ref{thm:HW}. 

Moreover, it follows from \eqref{eq:fin1} and the definition of $-\s^-_\av$ that 
\begin{equation}\label{eq:app4}
b_1\leq  -\s^-_\av \leq U_{J_\ell}.
\end{equation}
Thus, it follows from~\eqref{eq:app0} and \eqref{eq:app4} that Theorem~\ref{thm:HW} and~\ref{thm:main} 
can only give meaningful upper bounds for $|\s^+|$ if
\begin{equation}\label{eq:app5}
b_1\leq |\s^+|=\sum_{i=1}^{n}x_i\leq b_m.
\end{equation}
Similarly to the proof of~\eqref{eq:fin1}, we can now use~\eqref{eq:app5} to show (although we omit 
the details here) that Theorem~\ref{thm:main} implies the inequality $(I_k)$ in Theorem~\ref{thm:HW}, i.e.
\begin{equation}\label{eq:fin2}
|\s^-|\leq \left\lfloor\s^+_\av\right\rfloor\leq \s^+_\av \leq U_{I_k}.
\end{equation}
Finally, it follows from~\eqref{eq:fin1} and~\eqref{eq:fin2} that the bounds in Theorem~\ref{thm:main} 
are in general sharper or equivalent to the bounds in Theorem~\ref{thm:HW}. 

\subsection*{Acknowledgements}
The author thanks Alfred Geroldinger for pointing to and providing background material related to 
zero-sum sequences and their applications to Factorization Theory. The author also thanks the reviewer
for making valuable comments.

\end{document}